\newtheorem{thm}{Theorem}
\newtheorem{cor}[thm]{Corollary}
\newtheorem{lem}[thm]{Lemma}
\def\figref#1{Figure \ref{fig:#1}}
\def\figs#1#2#3{%
 \begin{figure}[tp]%
  \centerline{\includegraphics[scale=#3]{#1}}%
  \caption{#2}%
  \label{fig:#1}%
 \end{figure}
 }
\begin{document}

\title{How support lines touch an arc}
\author{Wacharin Wichiramala}
\maketitle
\begin{center}Department of Mathematics and Computer Science,\\
Faculty of Science, Chulalongkorn University
\end{center}

\begin{abstract}
We prove that each simple polygonal arc $\gamma$ attains at most two pairs of support lines of given angle difference such that each pair has $s_1<s_2<s_3$ that $\gamma(s_1)$ and $\gamma(s_3)$ are on one such line and $\gamma(s_2)$ is on the other line.
\end{abstract}


\section{Interesting way to look at support lines}

From [WW], to show that a compact set $K$ can cover every unit arc, it suffices to show that $K$ can cover every simple polygonal unit arc. In the work by Coulton and Movshovich [CM], they prove that for a simple polygonal arc $\gamma$, there is a pair of parallel support lines with points $A$, $B$ and $C$ appear on $\gamma$ in this parametric order such that $A$ and $C$ are one such line and $B$ is on the other line. In this work, we will generalize this result to a pair of lines with any given angle different. In addition, we show that for the parallel case, the pair of support lines is unique.

We first let $\gamma$ be a simple polygonal unit arc parametrized by arc length and suppose that $\gamma$ is not straight. Let $P_1,...,P_n$ be the corner points of its hull which appear in this parametric order with parameters $t_1,...,t_n$.
We first consider the multi-valued function
$T$ that tells when (in which parameter) a support line touches $\gamma$ as follows.
For convenient, we write $L_\theta$ for the support line of angle $\theta$ which is the line containing the ray making angle $\theta$ to X-axis and having $\gamma$ on its left side.
For example, $L_0$ is the horizontal support line under $\gamma$.
For each $\theta$, let $T(\theta)=\{\min_{\gamma(s)\in L_\theta}s,\max_{\gamma(s)\in L_\theta}s\}$.
It is clear that $T$ is periodic of period $2\pi$ and rotating $\gamma$ for
angle $\theta$ counter clockwise shifts the graph of $T$ to the right for
$\theta$.
Without loss of generality, we may rotate $\gamma$ so that $P_1$ and $P_2$ are on $L_0$. 
Hence, accordingly $T$ minimizes at 0 with value $t_1$.

Now let us consider the properties of the graph of $T$.
\begin{lem}
The graph of $T$ is composed of horizontal segments at level $t_1,...,t_n$.
\end{lem}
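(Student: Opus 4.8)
The plan is to read the graph of $T$ off the convex hull $H$ of $\gamma$, and I would begin by setting up the dictionary between the two. Since $\gamma$ is not straight, $H$ is a $2$-dimensional convex polygon, and its vertices are exactly $P_1,\dots,P_n$: every vertex of $H$ is an extreme point of $H=\mathrm{conv}(\gamma)$ and hence lies on the compact set $\gamma$, while each $P_i$ is a vertex of $H$ by hypothesis, and since $\gamma$ is simple each $P_i$ is attained at a single parameter $t_i$. For every $\theta$ the support line $L_\theta$ of $\gamma$ is also a support line of $H$, so $L_\theta\cap H$ is a face of $H$ (not all of $H$, as $H$ is $2$-dimensional): either a single vertex $P_i$, or a closed edge $\overline{P_iP_j}$ of $\partial H$, and the edge case arises only at finitely many directions $\theta$ (one per edge of $H$).

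Next I would dispose of the ``generic'' directions. If $L_\theta\cap H=\{P_i\}$, then $\gamma\subseteq H$ gives $\gamma\cap L_\theta\subseteq H\cap L_\theta=\{P_i\}$, and this is nonempty because $P_i\in\gamma$; hence $\gamma\cap L_\theta=\{P_i\}$ and $T(\theta)=\{t_i\}$. The set of $\theta$ for which $L_\theta$ touches $H$ exactly at $P_i$ is an open arc of directions (the interior of the normal cone of $H$ at $P_i$), and as $i$ ranges over $1,\dots,n$ these arcs partition the circle of directions away from the finitely many edge-directions; so over each such arc the graph of $T$ is a horizontal segment at height $t_i$. As $\theta$ runs once around the circle the supporting vertex runs once around the vertices of $H$, so, off those finitely many directions, the graph of $T$ is a union of horizontal segments whose heights are exactly $t_1,\dots,t_n$, one per $t_i$.

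Finally I would glue the picture together at an edge-direction $\theta_0$, where $L_{\theta_0}\cap H=\overline{P_iP_j}$. For $\theta$ slightly below $\theta_0$ the support line pivots on one of $P_i,P_j$ and for $\theta$ slightly above on the other, so the two horizontal segments abutting $\theta_0$ terminate at heights $t_i$ and $t_j$; what remains is to see that $T(\theta_0)$ adds nothing more, i.e. that $\gamma$ meets the segment $\overline{P_iP_j}$ only at parameters between $t_i$ and $t_j$, so that $T(\theta_0)=\{t_i,t_j\}$ and the two segments join up there. Assembling the three steps, the graph over one full period is exactly the union of the $n$ closed horizontal segments at heights $t_1,\dots,t_n$.

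I expect this last, gluing step to be the real obstacle. The convex-geometry bookkeeping of the first two steps is essentially formal once one knows that a supporting line meets a polygon in a vertex or an edge; but controlling how the simple arc $\gamma$ is permitted to run along an edge of its own hull — which is what is needed to pin down $T$ at the edge-directions — is where simplicity of $\gamma$ has to be used in earnest, and is where I would expect the genuine case analysis to reside. It is immediate when $\gamma$ touches $\partial H$ only at the $P_i$; the general case is the delicate one.
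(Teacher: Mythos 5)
Your first two steps are sound and coincide with the paper's own (very terse) argument: away from the finitely many directions at which the support line contains an edge of the hull $H$, $L_\theta\cap H$ is a single vertex $P_i$, the normal cone at $P_i$ gives an open arc of such directions, and over that arc the graph of $T$ is a horizontal segment at height $t_i$. The genuine gap is exactly the one you flag and then leave open: you never prove the gluing claim that at an edge direction $\theta_0$ with $L_{\theta_0}\cap H=\overline{P_iP_j}$ one has $T(\theta_0)=\{t_i,t_j\}$. A proof whose announced ``real obstacle'' is deferred is not a proof, so as written the proposal is incomplete.

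Worse, the deferred claim is false as you state it, so this step cannot be filled in. Consider the simple polygonal arc $(0,0)\to(0,1)\to(1,1)\to(1,0)\to(0.9,0.1)\to(0.5,0)$ (rescaled to unit length). Its hull is the unit square with corners $P_1=(0,0)$, $P_2=(0,1)$, $P_3=(1,1)$, $P_4=(1,0)$ in parametric order, and its terminal point $(0.5,0)$ lies in the relative interior of the bottom edge at a parameter strictly larger than both $t_1$ and $t_4$. For the direction $\theta_0$ whose support line is $y=0$ we get $\max_{\gamma(s)\in L_{\theta_0}}s$ equal to the parameter of $(0.5,0)$, which is not any $t_i$; the graph of $T$ therefore contains an isolated point at a level other than $t_1,\dots,t_n$. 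In other words, $\gamma$ may meet the relative interior of a hull edge at a parameter outside $[\min(t_i,t_j),\max(t_i,t_j)]$ (for instance at an endpoint of the arc), and no amount of case analysis will exclude this. The paper's own two-sentence proof makes the same silent overclaim (``a line segment on the boundary of the hull is corresponding to a vertical 2-valued jump''); the correct reading of the lemma is the weaker statement that, off the finitely many edge directions, the graph consists of horizontal segments at the levels $t_1,\dots,t_n$ --- which is all the rest of the paper uses, and which your first two steps already establish. So the right fix is not to carry out the delicate edge analysis you anticipate, but to restrict the assertion to non-edge directions (or to note that the exceptional values of $T$ there form finitely many isolated points of the graph).
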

\begin{proof}
A corner of the hull is corresponding to a horizontal segment. A line segment on the boundary of the hull is corresponding to a vertical 2-valued jump.
\end{proof}

Hence $T$ is  a step function with finitely many 2-valued jumps.
The width of each step is $\pi-\theta$ where $\theta$ is the interior angle
of the hull at the corresponding corner of the hull. The different is called
the exterior angle. Thus each width is not zero. Let
$\delta_1$ and $\delta_n$ be the widths corresponding to corners $P_1$ and
$P_n$.

\begin{lem} \label{hor conn}
For each $s\in[0,1]$, the horizontal cross section $T^{-1}(s)$ is either empty or a closed interval of length less than $\pi$.
\end{lem}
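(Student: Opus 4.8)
The plan is to compute $T^{-1}(s)$ directly for every value of $s$. Let $H$ denote the convex hull of $\gamma$ and, as in the discussion above, write $\theta_i$ for the interior angle of $H$ at the corner $P_i$, so that the step of the graph of $T$ sitting at level $t_i$ has width $\pi-\theta_i$.

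By the previous lemma $T$ takes only the values $t_1,\dots,t_n$, so $T^{-1}(s)=\emptyset$ whenever $s\notin\{t_1,\dots,t_n\}$. Fix $i$; it remains to prove that $T^{-1}(t_i)$ is a closed interval of length less than $\pi$. The key step is the identity
\[
 T^{-1}(t_i)=\{\theta:\ P_i\in L_\theta\},
\]
i.e.\ $T^{-1}(t_i)$ is exactly the set of angles whose support line passes through the vertex $P_i$. The inclusion ``$\subseteq$'' is immediate, since $t_i\in T(\theta)$ forces $\gamma(t_i)=P_i\in L_\theta$. For ``$\supseteq$'', assume $P_i\in L_\theta$. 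Because $\gamma\subseteq H$ and $P_i$ is a vertex of $H$, the support line $L_\theta$ meets $H$ either only at $P_i$ --- and then $\{s:\gamma(s)\in L_\theta\}=\{t_i\}$ by injectivity of $\gamma$, so that $T(\theta)=\{t_i\}$ --- or in a hull edge $e$ having $P_i$ as an endpoint, and then (see the last paragraph) $t_i$ is still the least or the greatest parameter of $\gamma$ along $e$, so again $t_i\in T(\theta)$.

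Granting the displayed identity, the lemma follows at once: $\{\theta:P_i\in L_\theta\}$ is precisely the angular normal cone of the convex polygon $H$ at the vertex $P_i$, a closed arc of directions whose length is the exterior angle there, namely $\pi-\theta_i$. Since $H$ is a nondegenerate convex polygon we have $\theta_i\in(0,\pi)$, so $0<\pi-\theta_i<\pi$, and thus $T^{-1}(t_i)$ is a closed interval of length less than $\pi$ (it is the closure of the single step at level $t_i$, consistently with the width computed earlier).

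The point that genuinely requires work, and which I expect to be the crux, is the claim invoked in parentheses above: when $L_\theta$ contains a hull edge $e$ incident to $P_i$, the parameter $t_i$ is extremal in $\{s:\gamma(s)\in e\}$; equivalently, $\gamma$ never meets $e$ at a parameter lying strictly outside the interval spanned by the parameters of the two endpoints of $e$. I would obtain this from simplicity of $\gamma$: the endpoints of $e$ are vertices of $H$ and hence are each hit exactly once, while the relative interior of $e$ consists of non-extreme boundary points; if $\gamma$ returned to $e$ at an ``outside'' parameter, then passing to a first (or last) return time and closing up the corresponding sub-arc of $\gamma$ by a sub-segment of $e$ would give a Jordan curve lying in the closed half-plane bounded by $L_\theta$, and tracking on which side the remaining vertices of $H$ fall yields a contradiction. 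An equivalent, perhaps cleaner, route is first to establish that $P_1,\dots,P_n$ appear along $\gamma$ in the same cyclic order in which they appear on $\partial H$; once that planar-topology fact is in hand the ``no return'' statement is essentially immediate, and so is the displayed identity.
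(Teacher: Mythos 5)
Your route is genuinely different from the paper's, and the difference is where the trouble lies. The paper never computes $T^{-1}(s)$ exactly: it takes $\theta_1<\theta_2$ in the cross section, notes that $\gamma(s)$ then lies on two distinct support lines and hence is a corner $P$ of the hull, so that $\theta_1,\theta_2$ both lie in the normal cone at $P$ (whence $\theta_2-\theta_1\le{}$the exterior angle${}<\pi$), and for every $\theta$ \emph{strictly} between them $L_\theta$ meets the hull in $\{P\}$ alone, so $T(\theta)=\{s\}$ by injectivity of $\gamma$. That argument deliberately never has to decide membership at the two boundary angles of the normal cone, i.e.\ at the angles where $L_\theta$ contains a whole hull edge. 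Your identity $T^{-1}(t_i)=\{\theta:P_i\in L_\theta\}$ forces you to handle exactly those angles, and that is where the proposal has a real gap.

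The claim you isolate as the crux --- if $L_\theta$ contains a hull edge $e$ incident to $P_i$, then $t_i$ is extremal in $\{s:\gamma(s)\in e\}$ --- is \emph{false} for a general simple polygonal arc, so the Jordan-curve sketch cannot be completed as stated. Take $\gamma$ through $(0,1)$, $(0,0)$, $(-1,0)$, $(-1,2)$, $(1,2)$, $(1,0)$ in that order (rescaled to unit length). This arc is simple, its hull is the rectangle with corners $(-1,0)$, $(-1,2)$, $(1,2)$, $(1,0)$, and those corners occur along $\gamma$ in their cyclic order on the hull boundary --- so your proposed ``cleaner route'' via cyclic order does not rescue the claim either --- yet $\gamma$ meets the bottom edge $e$ at $(0,0)$ at a parameter strictly smaller than the parameters of both endpoints of $e$. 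Hence the minimum of $\{s:\gamma(s)\in e\}$ is attained at a non-vertex, $t_i\notin T(\theta)$ for the corresponding $\theta$ even though $P_i\in L_\theta$, your displayed identity fails, and the relevant $T^{-1}(t_i)$ is a half-open interval. (This configuration also falsifies Lemma 1 and the closedness assertion of the present lemma, so the paper is implicitly excluding it; but that means your crux is precisely the implicit non-degeneracy hypothesis and cannot be derived from simplicity alone.) The economical repair within your framework is to deduce the crux from Lemma 1, which you already invoke: if the graph of $T$ lives only on the levels $t_1,\dots,t_n$, then $\min T(\theta)$ and $\max T(\theta)$ are parameters of hull corners lying on $L_\theta$, hence of endpoints of $e$, which is exactly your claim. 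Failing that, revert to the paper's argument, which needs nothing at the boundary angles.
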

\begin{proof}
If not empty, the cross section is connected, not broken, as follows. Suppose $\theta_1$ and $\theta_2$ are in the section with $\theta_1<\theta_2$. Then $L_{\theta_1}$ and $L_{\theta_2}$ touches $P=\gamma(s)$. Hence $P$ is a corner of the hull. Therefore $[\theta_1,\theta_2]$ is a subset of the section.
Let $\theta$ be the exterior angle of the hull at $P$. Thus $0<\theta<\pi$.
Since $\gamma$ is not straight, the 2 angles must be at most $\theta$ apart.
Therefore the different is less than $\pi$.
\end{proof}


Clearly $T$ is not constant. Furthermore, over $[0,2\pi]$, $T$ is
initially monotone increasing from $t_1$ and finally monotone decreasing back to $t_1$.
The next lemma shows that $T$ is not zigzag.
\begin{lem}
$T$ is monotone increasing and then monotone decreasing on $[0,2\pi]$.
\end{lem}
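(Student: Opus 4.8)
The plan is to describe the graph of $T$ from the convex hull $H$ of $\gamma$ and thereby reduce the statement to a fact about the cyclic order of the corners of $H$. For all but finitely many $\theta$ the support line $L_\theta$ touches $H$ in a single corner, and $T(\theta)$ is then the parameter of that corner; as $\theta$ increases through $[0,2\pi]$ this corner runs once around $\partial H$, each corner being the one touched on exactly one arc of values of $\theta$ (its normal cone), and these arcs are arranged around the circle in the same cyclic order as the corners along $\partial H$. So the sequence of values of $T$, read in order of increasing $\theta$ (ignoring the finitely many two-valued jumps, each of which lies between two consecutive values), is just the sequence of parameters of $P_1,\dots,P_n$ in their cyclic order along $\partial H$, started next to the edge $L_0=P_1P_2$. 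Since $P_1$ has the smallest parameter $t_1$, this reading starts at or beside the smallest entry, so it suffices to show that the cyclic sequence of corner parameters has at most two maximal monotone runs: cutting such a ``cyclically unimodal'' sequence open next to its minimum gives an increasing-then-decreasing sequence. (Lemma \ref{hor conn} by itself is not enough here: a step function whose every value occupies a single interval need not be unimodal, witness $1,3,2,4$; simplicity of $\gamma$ is what rules this out.)

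Suppose, for a contradiction, that $T$ is not monotone increasing and then monotone decreasing on $[0,2\pi]$, so that the linear reading above is not unimodal. Its first and last entries are $t_1$ and $t_2$, the values of $T$ on the two arcs of angles adjacent to $\theta=0$ (the endpoints of the edge $L_0$), and these are the two smallest $t_i$; hence $t_1$ appears in the cyclic sequence with both neighbours larger, i.e.\ as a local minimum, and since the reading beside it is not unimodal there is a further cyclic local minimum, so the cyclic sequence has at least two cyclic local minima, at least two cyclic local maxima, and at least four monotone runs. Pick corners $m_1,M_1,m_2,M_2$ occurring in this cyclic order along $\partial H$ with $m_1,m_2$ cyclic local minima and $M_1,M_2$ cyclic local maxima of the parameter sequence. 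Because the runs joining them are genuinely monotone, $\mathrm{par}(M_1)$ and $\mathrm{par}(M_2)$ each exceed both $\mathrm{par}(m_1)$ and $\mathrm{par}(m_2)$, and after a cyclic relabelling we may assume $\mathrm{par}(m_1)<\mathrm{par}(m_2)$. Along $\gamma$ the four corners are then met in the parametric order $m_1,\,m_2,\,\{M_1,M_2\}$, the last two in one order or the other.

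Now the crux, a planarity argument. Let $A = \gamma\big|_{[\mathrm{par}(m_1),\,\mathrm{par}(m_2)]}$, a sub-arc of $\gamma$ from $m_1$ to $m_2$, and let $B$ be $\gamma$ restricted to the parameter interval between $\mathrm{par}(M_1)$ and $\mathrm{par}(M_2)$, a sub-arc from $M_1$ to $M_2$. Both lie in $H$, and since $\gamma$ is injective and (by the inequalities above) the two parameter intervals are disjoint, $A$ and $B$ are disjoint. But the endpoint pairs $\{m_1,m_2\}$ and $\{M_1,M_2\}$ interleave on the boundary circle $\partial H$ — that is exactly the cyclic order $m_1,M_1,m_2,M_2$ — and a disc cannot contain two disjoint arcs joining two interleaved pairs of boundary points. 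This contradiction proves the cyclic parameter sequence cyclically unimodal, hence $T$ is monotone increasing and then monotone decreasing on $[0,2\pi]$.

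The step I expect to need the most care is this last planarity fact, because $A$ and $B$ are sub-arcs of $\gamma$, not arbitrary chords, and so may meet $\partial H$ in their interiors — running along whole hull edges or through other corners of $H$ — so one must use the form of the ``two disjoint arcs'' lemma that permits the arcs to touch the bounding circle. In the piecewise-linear situation at hand this is elementary, but it is precisely where the hypothesis that $\gamma$ is a simple polygonal arc enters.
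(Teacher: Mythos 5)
Your argument is, at its core, the same as the paper's: the paper also derives a contradiction by producing four points of $\gamma$ on the hull boundary --- $\gamma(t_1)$ and a second ``local minimum'' $\gamma(s_2)$ of $T$, interleaved on $\partial H$ with $\gamma(s_1)$ and $\gamma(s_3)$ --- such that the subarcs $\gamma(t_1)\gamma(s_2)$ and $\gamma(s_1)\gamma(s_3)$ occupy disjoint parameter ranges, and then invokes exactly your planarity fact (two disjoint arcs in a disc cannot join interleaved pairs of boundary points). Your reformulation in terms of cyclic unimodality of the corner-parameter sequence is a clean way of saying the same thing, and your closing caution about the subarcs possibly touching $\partial H$ concerns a point the paper itself glosses over.

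There is, however, one step that fails as written. You choose local minima $m_1,m_2$ and local maxima $M_1,M_2$ in interleaving cyclic order and assert that, since ``the runs joining them are genuinely monotone'', both $\mathrm{par}(M_1)$ and $\mathrm{par}(M_2)$ exceed both $\mathrm{par}(m_1)$ and $\mathrm{par}(m_2)$. That assertion uses all four cyclic runs $m_1M_1$, $M_1m_2$, $m_2M_2$, $M_2m_1$; when there are six or more cyclic extrema, four consecutive ones are joined by only three monotone runs, and after your cyclic relabelling the inequality needed for disjointness can fail. Concretely, if the extrema carry parameters $0.3,\,0.9,\,0.1,\,0.2,\,0.05,\,0.5$ in cyclic order (alternating min, max, min, max, min, max), then taking the first four and relabelling so that $\mathrm{par}(m_1)<\mathrm{par}(m_2)$ gives minima with parameters $0.1$ and $0.3$ and maxima with parameters $0.9$ and $0.2$; the two parameter intervals overlap, so the subarcs $A$ and $B$ need not be disjoint and the planarity contradiction evaporates. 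The repair is immediate and is in effect what the paper does: take $m_1=P_1$, the corner of globally smallest parameter $t_1$ (automatically a cyclic local minimum), let $m_2$ be any other cyclic local minimum (one exists precisely because the sequence is not cyclically unimodal), and let $M_1,M_2$ be the two maxima immediately flanking $m_2$. Then $\mathrm{par}(M_1),\mathrm{par}(M_2)>\mathrm{par}(m_2)>t_1$, so the parameter intervals are genuinely disjoint, and the interleaving on $\partial H$ holds because $m_1$ lies outside the boundary arc $M_1m_2M_2$. With that substitution your proof goes through and coincides with the paper's.
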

\begin{proof}
Suppose the contrary to get a contradiction that the graph is zigzag. From
the feature of the graph as in Lemma 1, there exist
$0<\theta_1<\theta_2<\theta_3<2\pi$ and $ s_1,s_2,s_3$ that $\{s_i\}=T(\theta_i)$
and $t_1<s_1>s_2<s_3$ as illustrated by \figref{up-down}.
\figs{up-down}{When the graph of $T$ is zigzag, not just increasing and then decreasing.}{.12}
Firstly we have $t_1\le s_2<s_1$. Next we will show that
$s_2>t_1$. Suppose that $s_2=t_1$. Since $\gamma$ is not on a line, $\gamma(t_1)=P_1$ is a corner of the
hull touched by $L_0$ and $L_{\theta_2}$. Hence $T$ is a constant over $[0,\theta_2]\ni\theta_1$.
Then $T(\theta_1)=\{t_1\}$, a contradiction.
Therefore $t_1<s_2<s_1$.
Since $\theta_1<\theta_3$ and $\gamma$ is not straight, by Lemma 2, we have $s_1\ne s_3$.
Now we have either $s_2<s_1<s_3$ or $s_2<s_3<s_1$.
\figs{crossing}{Subarcs $\gamma(t_1)\gamma(s_2)$ and $\gamma(s_1)\gamma(s_3)$
intersect.}{.4}
In both cases, the subarcs $\gamma(t_1)\gamma(s_2)$ and $\gamma(s_1)\gamma(s_3)$
intersect (see \figref{crossing}), a contradiction. Therefore $T$ must be
simply increasing and then decreasing, not zigzag, on $[0,2\pi]$.
\end{proof}

\section{Special pairs of support lines}
Mainly, we wish to find a pair of support lines with prescribed angle difference
that touch $\gamma(s_1)$ and $\gamma(s_3)$ by one line and $\gamma(s_2)$
by the other line such that $s_1<s_2<s_3$ (see \figref{pair-lines}).
More specifically,
for a given $\delta<2\pi$, we find $\theta_1$ and $\theta_2$ such that $|\theta_1-\theta_2|$
is $\delta$
or $2\pi-\delta$ with $T(\theta_1)\supseteq \{s_1,s_3\}$ and $T(\theta_2)\ni s_2$ with
$s_1<s_2<s_3$.
\figs{pair-lines}{A pair of lines with triple points.}{.7}
We will show the existence of this pair of support lines with triple points
on them.

Before we deal with the complicate multi-valued function $T$, we will practice
on similar functions and get similar results. First we try on continuous
functions.

\begin{lem}
Let $f:[0,2\pi]\to[0,1]$ be a continuous function with $f(0)=f(2\pi)=0$ and
$f(c)=1$ for some $c$ in $(0,2\pi)$. Suppose $f$ is strictly increasing on
$[0,c]$ and is strictly decreasing on $[c,2\pi]$ and $0<\delta<2\pi$.
Then there exists a unique $x$ such that $f(x)=f(x+\delta)$.
\end{lem}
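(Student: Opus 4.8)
The plan is to trade the two-point condition $f(x)=f(x+\delta)$ for a single zero-finding problem and then read off existence from the intermediate value theorem and uniqueness from the unimodality of $f$. Since $f(x+\delta)$ only makes sense when $x+\delta\le 2\pi$, I would work on the interval $[0,2\pi-\delta]$ and set $g(x)=f(x+\delta)-f(x)$. This $g$ is continuous on $[0,2\pi-\delta]$, and $x$ solves $f(x)=f(x+\delta)$ (with $x,x+\delta\in[0,2\pi]$) precisely when $g(x)=0$.

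For existence I would evaluate $g$ at the two endpoints. Because $f$ increases strictly from $f(0)=0$ on $[0,c]$ and decreases strictly to $f(2\pi)=0$ on $[c,2\pi]$, one has $f>0$ on the open interval $(0,2\pi)$. Since $0<\delta<2\pi$, both $\delta$ and $2\pi-\delta$ lie in $(0,2\pi)$, so $g(0)=f(\delta)-f(0)=f(\delta)>0$ and $g(2\pi-\delta)=f(2\pi)-f(2\pi-\delta)=-f(2\pi-\delta)<0$. The intermediate value theorem then supplies an $x$ with $g(x)=0$.

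For uniqueness the key observation is that every zero of $g$ is forced into the subinterval where $x<c<x+\delta$. Indeed, if $x\ge c$ then $x$ and $x+\delta$ both lie in $[c,2\pi]$, on which $f$ is strictly decreasing, so $g(x)=f(x+\delta)-f(x)<0$; and if $x+\delta\le c$ then $x$ and $x+\delta$ both lie in $[0,c]$, on which $f$ is strictly increasing, so $g(x)>0$. Hence any zero satisfies $c-\delta<x<c$. On the closed interval $J=[\max(0,c-\delta),\min(2\pi-\delta,c)]$ we have $x\le c$ and $x+\delta\ge c$ throughout, so as $x$ increases along $J$ the term $f(x)$ strictly increases while $f(x+\delta)$ strictly decreases; therefore $g$ is strictly decreasing on $J$ and has at most one zero there. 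Combining this with the localization of zeros inside $J$ gives exactly one solution.

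I expect the only delicate point to be the bookkeeping near the endpoints of $J$: one should note that $x=c$ is not a zero (there $f(x)=1$ is the strict maximum, so $g(c)<0$ when $c\le 2\pi-\delta$) and likewise the point with $x+\delta=c$ is not a zero, and that the monotonicity used to prove $g$ strictly decreasing on $J$ is genuinely strict because each of the intervals $[0,c]$ and $[c,2\pi]$ carries strict monotonicity of $f$. Once these boundary cases are handled, the argument is a direct application of the intermediate value theorem together with the hypothesis that $f$ is strictly unimodal.
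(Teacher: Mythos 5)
Your proof is correct, but it runs in a different direction from the paper's. The paper inverts $f$ on each of its two monotone pieces and studies the \emph{width function} $D(y)=(f|_{[c,2\pi]})^{-1}(y)-(f|_{[0,c]})^{-1}(y)$ on the range $[0,1]$: this is continuous and strictly decreasing from $2\pi$ to $0$, so it takes the value $\delta$ at a unique level $y$, which is then unwound into the unique $x$. You instead work on the domain, applying the intermediate value theorem to $g(x)=f(x+\delta)-f(x)$ on $[0,2\pi-\delta]$ and then localizing every zero to the window where $x\le c\le x+\delta$, on which $g$ is strictly decreasing. Your route is more elementary (no inverse functions) and is actually more explicit about uniqueness: the paper's phrase ``equivalently there is a unique $x$'' silently uses the very localization you prove, namely that any solution must straddle $c$, so that it is recovered from the unique level $y$. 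What the paper's level-set formulation buys is that it transfers almost verbatim to the multi-valued step function $T$ in the main theorem, where one ``scans down from the top'' of the graph and the analogue of $D$ is a stepping-down interval-valued function; your endpoint-sign IVT argument does not carry over as directly to that discontinuous setting. Your endpoint bookkeeping (nonemptiness of $J$, strictness of the monotonicity, $g(c)<0$) is all in order.
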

\begin{proof}
Our plan is to find $y$ that $f^{-1}(y)$ contains $x$ and $x+\delta$. To
get such $x$ and $y$, we will use inverses of restrictions of $f$ as follows.
First note that $f$ is 1-1 and onto on $[0,c]$ and on $[c,2\pi]$.
Let $D=(f|_{[c,2\pi]})^{-1}-(f|_{[0,c]})^{-1}$. Since $D$ is strictly decreasing
and continuous
on $[0,1]$ starting from $2\pi$ down to 0, there is a unique $y$ that $D(y)=\delta$.
Equivalently there is a unique $x$ such that $f(x)=y=f(x+\delta)$.
\end{proof}

 Now we go back to the complicate function $T$. Note that
$\delta_1$ and $\delta_n$ are the exterior angles of the hull at $P_1$ and
$P_n$. From the graph, $\delta_1$ and $\delta_n$ are the widths of the minimum and maximum sets. From Lemma \ref{hor conn}, we have $\delta_1,\delta_n<\pi$.
Now we define at each $\theta$, the interval $I_\theta$ to be the closed interval $[\min T(\theta),\max T(\theta)]$ (illustratively and correspondingly
considered as the vertical segment $\{\theta\}\times I_\theta$ over $x=\theta$ in the graph.

\begin{thm}
If $\delta_n\le\delta<2\pi$, there exists a unique pair of support lines of angle
difference $\delta$ with triple points on them.
\end{thm}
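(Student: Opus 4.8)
The plan is to imitate, for the multivalued step function $T$, what the preceding lemma does for a continuous unimodal $f$: build a monotone ``angular width at a given height'' function and invert it. The only genuinely new feature is that a height of $T$ is straddled by a two-valued jump rather than hit at a single angle, so the inverse has to be read off from jumps. For each height $s\in(t_1,t_n)$ that is not one of the corner heights $t_2,\dots,t_{n-1}$, let $\alpha(s)$ be the unique angle on the rising part of $T$ at which $T$ jumps across $s$ (so that $\min T(\alpha(s))<s<\max T(\alpha(s))$), and let $\beta(s)$ be the unique such angle on the falling part; both exist and are unique because, by the lemma that $T$ rises and then falls, $T$ is monotone on each part, and being a step function whose only repeated values are $t_1,\dots,t_n$ it crosses any other height exactly once, necessarily by a genuine two-valued jump. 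Put $D(s)=\beta(s)-\alpha(s)$. As $s$ grows, $\alpha(s)$ cannot decrease and $\beta(s)$ cannot increase --- a straddling jump can only migrate toward the peak as the level rises --- so $D$ is a strictly decreasing step function with discontinuities exactly at $t_2,\dots,t_{n-1}$. Its endpoint values are $D(s)\to\delta_n$ as $s\uparrow t_n$, since there the two straddling jumps are the two hull edges bounding the maximum set, which are $\delta_n$ apart; and $D(s)=2\pi-\delta_1$ for $s$ just above $t_1$, since the up-jump there is the edge $P_1P_2$ at angle $0$ while the down-jump is the edge at the left end of the minimum set.

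Now fix $\delta$. If $\delta=D(s)$ for some height $s$, set $\{\theta_1,\theta_2\}=\{\alpha(s),\beta(s)\}$, so $|\theta_1-\theta_2|\in\{\delta,2\pi-\delta\}$; since $\alpha(s)$ is on the rising and $\beta(s)$ on the falling part they are distinct, and hence $T(\theta_1)\ne T(\theta_2)$ --- a support line through two points of $\gamma$ must be the line of a hull edge, which pins it down. Label the values in $T(\theta_1)\cup T(\theta_2)$ as $a_1,b_1$ (below $s$) and $a_2,b_2$ (above $s$) with $a_1\le b_1$. If $a_1<b_1$ then $a_1<b_1<s<a_2$, and $(s_1,s_2,s_3)=(a_1,b_1,a_2)$ works: $\gamma(s_1),\gamma(s_3)\in L_{\theta_1}$, $\gamma(s_2)\in L_{\theta_2}$, $s_1<s_2<s_3$. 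If $a_1=b_1$ then $a_2\ne b_2$, and the smaller of $a_2,b_2$ lies strictly between the two parameters carried by the line that carries the larger one, again giving the order. If instead $\delta$ lies in the gap of $D$ at a corner height $t_k$, take $\theta_2$ to be the straddling jump on whichever part of $T$ is continuous at $t_k$ --- its straddling pair then brackets a whole neighbourhood of $t_k$, so $t_k$ is strictly between its two values --- and take $\theta_1=\theta_2\mp\delta$, which lands inside the angular range of $P_k$ exactly because the width of this gap equals the width of that range. Then $\gamma(t_k)\in L_{\theta_1}$ while the two points of the straddling pair lie on $L_{\theta_2}$, once more with $s_1<t_k<s_3$. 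So $D$, together with the open gaps left by its jumps, produces a pair of support lines with a triple point for every $\delta\in[\delta_n,2\pi-\delta_1]$.

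Uniqueness comes from running the monotonicity of $D$ backwards, just as the strict monotonicity of the function $D$ in the preceding lemma forced a single $y$: the constant pieces of $D$ and the gaps at the corner heights partition the range of $\delta$, each constant piece being attached to one straddling pair of angles and each gap to one corner range, so the height or corner responsible for a given $\delta$ --- and hence the pair of lines --- is forced. I expect two places to carry the weight. One is the bookkeeping just sketched: controlling how the two straddling jumps migrate with the level, pinning the endpoint identifications $\delta_n$ and $2\pi-\delta_1$, isolating the gap case, and disposing of the coincidence $a_1=b_1$, all of which hinge on the way the hull corners $P_1,\dots,P_n$ interleave between the rising and falling parts of $T$. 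The other, and the one I expect to be the genuine obstacle, is the residual window $(2\pi-\delta_1,2\pi)$ of angle differences belonging to nearly coincident support lines, which the width-function argument does not reach and which must be treated by a separate, more delicate, argument.
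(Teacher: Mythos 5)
Your construction on the range $[\delta_n,\,2\pi-\delta_1]$ is essentially the paper's argument in different notation: your $D(s)=\beta(s)-\alpha(s)$ is the paper's difference of the two inverse branches of the ``filled'' step function $T^|$, your treatment of the gaps of $D$ at the corner heights $t_2,\dots,t_{n-1}$ is the paper's use of the whole interval $I_\theta$ at a step, and the endpoint identifications $D=\delta_n$ near $t_n$ and $D=2\pi-\delta_1$ near $t_1$ agree with the paper's picture. Two smaller points. First, your uniqueness paragraph only shows that the construction hits each $\delta$ once; to conclude that the pair is unique you must rule out a competing pair not produced by the construction, which the paper does by taking an arbitrary $\theta'\ne\theta$ with the triple-point property and using the monotonicity of $T$ on the two slopes together with Lemma~\ref{hor conn} to force $T^{-1}(t_n)$ to be longer than $\delta$. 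Second, your case analysis leaves undetermined which of $L_{\alpha(s)},L_{\beta(s)}$ carries the two outer points; that choice is what separates this theorem from its companion about difference $2\pi-\delta$, so it should be pinned down.

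The window $(2\pi-\delta_1,2\pi)$ that you flag as needing ``a separate, more delicate argument'' is a genuine gap in your proposal, but it is not one you could have closed: the statement fails there. Take $\gamma$ from $(0,1)$ to $(0,0)$ to $(1,0)$, so the hull is a right isosceles triangle and $\delta_1=\delta_n=3\pi/4$. Any support line carrying two points $\gamma(s_1),\gamma(s_3)$ with some $\gamma(s_2)$, $s_1<s_2<s_3$, on a \emph{different} support line must be the hypotenuse line through the two endpoints (a line through an interior point of a leg is the leg's line, and then the companion line through a parameter strictly in between would have to coincide with it); the companion line then passes through the corner $(0,0)$ or contains a leg, and the resulting angle differences, measured either way around, fill exactly $[3\pi/4,5\pi/4]=[\delta_n,\,2\pi-\delta_1]$. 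So no pair exists for $\delta\in(2\pi-\delta_1,2\pi)$. The paper's own proof silently breaks at exactly this point: for such $\delta$ the only level with $\delta\in D(s)$ is $s=t_1$, forcing $\theta=0$ and $\theta+\delta\in T^{-1}(t_1)$, the recipe then yields $s_1=s_2=t_1$, and the claimed contradiction ``$T^{-1}(s_1)$ is at least $\delta$ wide'' does not follow, because $T^{-1}(t_1)$ joins $\theta$ to $\theta+\delta$ through the short arc of length $2\pi-\delta<\delta_1$ rather than containing $[\theta,\theta+\delta]$. The correct hypothesis is $\delta_n\le\delta\le 2\pi-\delta_1$, which is precisely what your $D$ covers; your instinct that the residual window is the obstacle was right, but the conclusion should be that the theorem's range must be restricted, not that a further argument is missing.
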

\begin{proof}
\figs{top-bottom}{Top and bottom of the graph of $T$.}{.7}
Our plan is to scan down from the top of the graph over the interval $[0,2\pi]$. Since $\delta\ge\delta_n$, we may find $s$ together with $\theta$
such that $s\in I_\theta$ and $s\in I_{\theta+\delta}$  (see \figref{top-bottom}). There the mountain-like
graph is $\delta$ wide on level $s$.
If $\delta=\delta_n$, we have $T^{-1}(t_n)$ in the form $[\alpha,\alpha+\delta_n]$
together with the arc in the situation as illustrated by \figref{delta1}.
\figs{delta1}{The situation at $\gamma(t_n)=P_n$.}{.3}
Hence we have lines $L_\alpha$ and $L_{\alpha+\delta_n}$ and $r<t<t_n$ such
that $\gamma(t)$ is
on such line and $\gamma(r)$ and $\gamma(t_n)=P_n$ are on the other line.
Now we are in the case $\delta>\delta_n$.
First we will fill up the gaps where $T$ jumps by defining $T^|(\theta)=I(\theta)$.
Note that  for intervals $I$ and $J$, the substraction $I-J$ is simply $\{i-j|i\in
I$ and $j\in J\}$. Let $\theta_n$ be in $T^{-1}(t_n)$.
The filled, multi-valued function $T^|$ is onto and increasing on $[0,\theta_n]$
and is onto and decreasing on $[\theta_n,2\pi]$ (looks like a step pyramid). Hence the inverses of both
restrictions are multi-valued functions that are increasing and decreasing
respectively. Let $D=(T^||_{[\theta_n,2\pi]})^{-1}-(T^||_{[0,\theta_n]})^{-1}$. Note that $D$ is a stepping down function with every step filled.
Now, since $D$ is decreasing (together with the single-valued functions $\max D$ and $\min D$),
there is $s$ that $D(s)$ contains $\delta$. Equivalently there is $\theta$ that
$s\in T^|(\theta)\cap T^|(\theta+\delta)=I(\theta)\cap I(\theta+\delta)$.
Suppose both $I(\theta)$ and $I(\theta+\delta)$ degenerate. Then $T(\theta)=T(\theta+\delta)=\{s\}$.
Since $\delta>\delta_n$, we have $s<t_n$.
By Lemma \ref{hor conn}, the interval $[\theta,\theta+\delta]$ is a subset of $T^{-1}(s)$ and contains $\theta_n$ where $T$ takes value $t_n>s$, a contradiction.
Hence $I(\theta)$ or $I(\theta+\delta)$ does not degenerate.
Then one endpoint of such interval is in the other nondegenerated interval.
Suppose for the first case that $\min I(\theta)\in I(\theta+\delta)$. Let $s_1=\min I(\theta+\delta)$, $s_2=\min
I(\theta)$ and $s_3=\max I(\theta+\delta)$.
Thus $s_1\le s_2\le s_3$ and $s_1<s_3$. We have $\gamma(s_2)$ on $L_\theta$ and $\gamma(s_1)$, $\gamma(s_3)$ on $L_{\theta+\delta}$.
If $s_1=s_2$, $T^{-1}(s_1)$ is at least $\delta$ wide, a contradiction.
Hence $s_1<s_2<s_3$. Next we will show the uniqueness of $\theta$. Suppose
$L_{\theta'}$ and $L_{\theta'+\delta}$ have such triple points on them with
$\theta'\ne\theta$. Hence $I_{\theta'}\cap I_{\theta'+\delta}$ contains some
$s'$. We may assume $\theta<\theta'$. Since $\theta<\theta'\le\max T^{-1}(t_n)$, $T(\theta)\le T(\theta')$. Similarly, since $\min T^{-1}(t_n)\le\theta+\delta<\theta'+\delta$,
$T(\theta+\delta)\ge T(\theta'+\delta)$. We must have $s=s'$. Thus $T$ is constant over
$[\theta,\theta']$ and over $[\theta+\delta,\theta'+\delta]$. By Lemma \ref{hor
conn}, $T^{-1}(t_n)$ is longer than $\delta$, a contradiction.
The other cases can be treated similarly.
\end{proof}

Similarly we have the following theorem.

\begin{thm}
If $\delta_1\le\delta<2\pi$, there exist a unique pair of support lines of angle
difference $2\pi-\delta$ with triple points on them.
\end{thm}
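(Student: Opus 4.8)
The plan is to reduce this to the previous theorem, applied to a mirror image of $\gamma$: reflection reverses orientation, and that is precisely what trades an angle difference $\delta$ for $2\pi-\delta$.

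Set $\Gamma(s)=\overline{\gamma(1-s)}$, the arc $\gamma$ reflected across the real axis and then traversed backwards. Then $\Gamma$ is again a simple polygonal unit arc that is not straight, so the preceding development applies to it after a rotation into standard position, which is harmless because exterior angles of the hull are rotation invariant. The hull of $\Gamma$ is the reflection of the hull of $\gamma$, and $\Gamma$ passes through the corners $\overline{P_n},\overline{P_{n-1}},\dots,\overline{P_1}$ in this parametric order, at parameters $1-t_n<\dots<1-t_1$. Reflection preserves angles, so the exterior angle of $\Gamma$ at its first corner $\overline{P_n}$ is $\delta_n$ and at its last corner $\overline{P_1}$ is $\delta_1$; thus, in the notation of the previous theorem applied to $\Gamma$, its ``$\delta_1$'' equals $\delta_n$ and its ``$\delta_n$'' equals $\delta_1$, so the maximum set of the support function of $\Gamma$ has width $\delta_1$.

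Next I would record how the two pictures correspond. Because $z\mapsto\overline z$ reverses orientation, it carries the directed support line $L_\theta$ of $\gamma$ (hull on the left, ray at angle $\theta$) to the directed support line of $\Gamma$ with ray at angle $\pi-\theta$; hence $s$ lies in the support set of $\Gamma$ at angle $\theta$ exactly when $1-s$ lies in $T(\pi-\theta)$. Now apply the previous theorem to $\Gamma$ with the same $\delta$: since the maximum-set width of $\Gamma$ is $\delta_1\le\delta<2\pi$, it provides a unique pair of support lines of $\Gamma$ of angle difference $\delta$, together with parameters $\mu_1<\mu_2<\mu_3$, such that $\Gamma(\mu_2)$ lies on the smaller-angle line of the pair and $\Gamma(\mu_1),\Gamma(\mu_3)$ on the larger-angle one. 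Put $s_1=1-\mu_3$, $s_2=1-\mu_2$, $s_3=1-\mu_1$, so $s_1<s_2<s_3$; reflecting back, $\gamma(s_2)$ lies on some support line $L_\alpha$ of $\gamma$ and $\gamma(s_1),\gamma(s_3)$ lie on $L_{\alpha-\delta}$ (the auxiliary rotation drops out of this count). The counterclockwise turn from the line carrying the single point $\gamma(s_2)$ to the line carrying $\gamma(s_1),\gamma(s_3)$ is therefore $(\alpha-\delta)-\alpha\equiv 2\pi-\delta\pmod{2\pi}$: reflection has reversed the sense of rotation. So $\{L_\alpha,\,L_{\alpha-\delta}\}$ is a pair of support lines of angle difference $2\pi-\delta$ carrying triple points in the prescribed parametric order, and uniqueness for $\Gamma$ passes to uniqueness here.

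The step I expect to be the main obstacle is exactly the orientation bookkeeping above: one must fix the convention that ``angle difference $\eta$'' refers to the directed counterclockwise turn from the support line through the lone point $\gamma(s_2)$ to the support line through $\gamma(s_1)$ and $\gamma(s_3)$, and then verify carefully that reflection sends $\eta$ to $2\pi-\eta$, that reversal of the parameter turns $\mu_1<\mu_2<\mu_3$ into $s_1<s_2<s_3$, and that throughout this the line carrying two of the points and the line carrying one are never interchanged --- in particular that the previous theorem really does place the lone point on the smaller-angle line of its pair. Everything else --- that $\Gamma$ is a genuine simple polygonal unit arc, that its extreme exterior angles are $\delta_n$ and $\delta_1$, and the reflection formula for support lines --- is routine. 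Alternatively, and in the spirit of the word ``similarly,'' one may simply rerun the proof of the previous theorem with the top corner $P_n$ and the bottom corner $P_1$ interchanged: cut the circle of angles at the maximum $\theta_n$ rather than at $0$, so the graph of $T$ becomes a valley whose flat bottom over $\theta\equiv0$ has width $\delta_1$; scan upward from that bottom; the arc of angles one produces wraps once around through $\theta=0$, which is why the angle difference emerges as $2\pi-\delta$ and the hypothesis is $\delta_1\le\delta$; and the degenerate configuration is excluded just as before, now because the relevant arc of angles contains the minimum set on which $T\equiv t_1$, with Lemma~\ref{hor conn} supplying the length bound that keeps this honest.
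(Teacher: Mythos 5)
Your proposal is correct in substance, and its main route is genuinely different from the paper's. The paper simply reruns the scanning argument of the previous theorem on the ``valley'' of the graph of $T$: it views $T$ over $[\theta_n,\theta_n+2\pi]$, where the graph decreases to the flat minimum of width $\delta_1$ around $\theta\equiv 0$ and then increases again, and scans upward from that bottom --- which is exactly your closing alternative. Your primary argument instead reduces the statement to the previous theorem by applying it to $\Gamma(s)=\overline{\gamma(1-s)}$; since reflection plus parameter reversal swaps the roles of $P_1$ and $P_n$ (hence of $\delta_1$ and $\delta_n$) and turns the mountain of $T_\gamma$ into the valley of $T_\Gamma$ and vice versa, the mountain scan for $\Gamma$ unwinds precisely to the valley scan for $\gamma$. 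This buys you uniqueness and the nondegeneracy/case analysis for free from the previous theorem, at the cost of the orientation bookkeeping you flag. One caveat there: your stated convention requires that the previous theorem place the lone point $\gamma(s_2)$ on the smaller-angle line of its pair, but the case analysis in that proof (``the other cases can be treated similarly'') allows the lone point on either of $L_\theta$, $L_{\theta+\delta}$, so that convention is not actually available. What does distinguish the two theorems' pairs is which of the two arcs between the two line directions (the one of length $\delta$ or the one of length $2\pi-\delta$) contains the direction $\theta_n$ maximizing $T$; your construction does produce the pair whose $\delta$-arc straddles the minimum of $T_\gamma$, i.e.\ the valley pair, so existence, uniqueness, and distinctness from the first pair when $\delta\ne\pi$ all go through. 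This imprecision about what ``angle difference $2\pi-\delta$'' means is present in the paper as well, so it is not a defect specific to your argument.
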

\begin{proof}
Now we scan up from the bottom of the graph over the interval $(0,4\pi)$.
Precisely, as the graph looks like ``M", look at the middle part. More precisely,
look over the interval $[\theta,\theta+2\pi]$ where $T$ attains maximum at
$\theta$.
\end{proof}

\begin{cor}
Both previous theorems give different pairs of support line for $\delta\ne\pi$.
But when $\delta=\pi$, the 2 pairs are identical.
\end{cor}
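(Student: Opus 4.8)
The plan is to read off, from the two constructions above, the exact angular location of each pair of support lines relative to where $T$ attains its maximum, and then to compare the two. For a pair of support lines carrying triple points let $u\in[0,2\pi)$ be the direction of the line touching $\gamma$ on the part of $[0,2\pi]$ where $T$ is increasing and $d\in[0,2\pi)$ the direction of the line touching $\gamma$ where $T$ is decreasing, so that $u\le\theta_n\le d$. This labelling is legitimate: by the monotonicity of $T$ on each of its two branches (Lemma 3), two support lines touching $\gamma$ inside a single branch have $I$-intervals meeting (if at all) only at a common endpoint, so they cannot carry $s_1<s_2<s_3$ in the prescribed configuration; hence one line of the pair sits on an increasing branch and the other on a decreasing branch. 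In particular $d-u\in(0,2\pi)$ is an invariant of the (unordered) pair, and it is precisely the ``angle difference'' attached to the pair in the statements above.

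Next I would trace the two constructions. In the proof of the first theorem above the pair is $L_\theta,L_{\theta+\delta}$ with $0\le\theta\le\theta_n\le\theta+\delta\le 2\pi$, so $u=\theta$, $d=\theta+\delta$ and $d-u=\delta$. In the proof of the second theorem above one scans the graph over $[\theta_n,\theta_n+2\pi]$ upward from its bottom; the pair then consists of a line with direction $\alpha$ on the decreasing branch $[\theta_n,2\pi]$ and a line with direction $\beta$ on the following increasing branch $[2\pi,\theta_n+2\pi]$, with $\beta-\alpha$ equal to the prescribed width $\delta$ (which is exactly why the hypothesis there reads $\delta_1\le\delta$). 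Reducing $\beta$ modulo $2\pi$ gives $u=\beta-2\pi$ and $d=\alpha$, hence $d-u=2\pi-\delta$, matching the angle difference $2\pi-\delta$ stated there.

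For $\delta\ne\pi$ we have $\delta\ne 2\pi-\delta$, so the two pairs have different values of the invariant $d-u$; being different as sets of directions they are different pairs of support lines, and hence so are the two triple-point configurations carried by them. For $\delta=\pi$ both hypotheses are satisfied, since $\delta_1,\delta_n<\pi=\delta$ by Lemma \ref{hor conn}, and $2\pi-\delta=\pi=\delta$; then the first theorem and the second theorem each assert the existence of a \emph{unique} pair of support lines of angle difference $\pi$ carrying triple points, so the pair produced by the first construction and the pair produced by the second must coincide.

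The delicate point is the bookkeeping in the second paragraph: confirming that in each construction the two touching directions, after reduction to $[0,2\pi)$, differ by exactly $\delta$ (respectively $2\pi-\delta$), and dealing with the degenerate possibilities in which a touching direction equals $\theta_n$, equals $0$, or lies in the interior of a non-degenerate set $T^{-1}(t_n)$, so that the invariant $d-u$ is still well defined and the comparison goes through.
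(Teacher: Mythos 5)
Your argument is correct; the paper's own proof of this corollary is just the word ``Clear,'' so you are supplying the justification the author omitted rather than diverging from it. The key observation --- that for a triple-point pair the two lines must lie on opposite monotone branches of $T$, so the angular gap containing the maximum of $T$ is a well-defined invariant of the pair, equal to $\delta$ for the first construction and $2\pi-\delta$ for the second, whence the pairs differ unless $\delta=\pi$ and coincide by the uniqueness clause when $\delta=\pi$ --- is exactly the right way to make the statement precise.
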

\begin{proof}
Clear.
\end{proof}

\end{document}